\newcommand{\eps}{\varepsilon}
\newcommand{\R}{\mathbb{R}}
\newcommand{\RT}{{\mathbb{R}^3}}
\renewcommand{\le}{\leqslant}
\renewcommand{\ge}{\geqslant}
\newcommand{\g }{\gamma }
\renewcommand{\l }{\lambda}
\newcommand{\n }{\nabla }
\renewcommand{\O}{\Omega}
\newcommand{\G}{\Gamma}
\newcommand{\BR}{B_R}
\renewcommand{\H}{H^1_0(\Omega)}
\renewcommand{\o}{\omega}
\def\bbm[#1]{\mbox{\boldmath $#1$}}
\newcommand{\beq }{\begin{equation}}
\newcommand{\eeq }{\end{equation}}
\newcommand{\iO}{\int_{\Omega}}
\newtheorem{theorem}{Theorem}[section]
\newtheorem{lemma}[theorem]{Lemma}
\newtheorem{proposition}[theorem]{Proposition}
\newtheorem{remark}[theorem]{Remark}
\title{
On a system involving a critically growing nonlinearity 
\footnote{The authors
are supported by GNAMPA Project ``Problemi ellittici con termini
non locali''.}}
\author{Antonio Azzollini \thanks{Dipartimento di Matematica ed Informatica, Universit\`a degli
Studi della Basilicata,  Via dell'Ateneo Lucano 10, I-85100 Potenza,
Italy, e-mail: {\tt antonio.azzollini@unibas.it}}
 \; \& \;
 Pietro d'Avenia\thanks{Dipartimento di Matematica, Politecnico di
Bari, Via E. Orabona 4, I-70125 Bari, Italy, e-mail: {\tt
p.davenia@poliba.it}}}
\date{}
\begin{document}

\maketitle

\begin{abstract}
This paper deals with the system
\[
\left\{
\begin{array}{ll}
    -\Delta u = \lambda u + q |u|^3 u \phi
    &
    \hbox{in } B_R,\\
    -\Delta \phi=q |u|^5
    &
    \hbox{in } B_R,\\
    u=\phi=0
    &
    \hbox{on } \partial B_R.
\end{array}
\right.
\]
We prove existence and nonexistence results depending on the value of $\lambda$.
\end{abstract}

\noindent{\small \textit{Keywords: Critical nonlinearity, Schr\"odinger-Poisson system}.
\newline\textit{2000 MSC:} 35J20, 35J57, 35J60.}

\section{Introduction}

Recently, in the paper \cite{ADL}, it has been studied the following
system
    \begin{equation}\label{eq:SP}
\left\{
\begin{array}{ll}
-\Delta u=\eta|u|^{p-1}u + \eps q\phi f(u)& \text{in } \O,  \\
- \Delta \phi=2 qF(u)& \text{in } \O,   \\
u=\phi=0 & \text{on }\partial \O,
\end{array}
\right.
    \end{equation}
where $\O \subset \mathbb{R}^3$ is a bounded domain with smooth
boundary $\partial \O$, $1 < p < 5$, $q >0$, $\eps,\eta = \pm 1$,
$f:\R\to\R$ is a continuous function
and $F(s)=\int_0^s f(t)\, dt.$\\
If $f(s)=s$, \eqref{eq:SP} becomes the well known
Schr\"odinger-Poisson system in a bounded domain which has been
investigated by many authors (see e.g. \cite{BF, CS, PS0, PS, RS, S}). In \cite{ADL} it has been
showed that if $f(s)$ grows at infinity as $s^4$, then a variational
approach based on the reduction method (e.g. as in \cite{BF}) becomes more difficult
because of a loss of compactness in the coupling term. In this case
problem \eqref{eq:SP} recalls, at least formally, the more known
Dirichlet problem
    \begin{equation}\label{eq:BN}
    \left\{
\begin{array}{ll}
        -\Delta u =\l u^p + u^5& \text{in } \O,  \\
        u>0& \text{in } \O\\
u=0& \text{in } \partial\O
\end{array}
\right.
    \end{equation}
which has been studied and solved for $p\in [1,5[$ by Brezis and
Nirenberg in the very celebrated paper \cite{BN}. In that paper, by
means of a deep investigation of the compactness property of
minimizing sequences of a suitable constrained functional, it was
showed that, if $p=1$ and $\O$ is a ball, the Palais-Smale condition holds at the level
of the infimum if and only if the parameter $\l$ lies into an interval
depending on the first eigenvalue of the operator $-\Delta$. In the same
spirit of \cite{BN} and \cite{ADL}, in this paper we are interested
in studying the following problem
\begin{equation} \label{P}\tag{$\mathcal P$}
\left\{
\begin{array}{ll}
    -\Delta u = \l u + q |u|^3 u \phi
    &
    \hbox{in } \BR,\\
    -\Delta \phi=q |u|^5
    &
    \hbox{in } \BR,\\
    u=\phi=0
    &
    \hbox{on } \partial \BR.
\end{array}
\right.
\end{equation}
where $\l\in\R$ and $\BR$ is the ball in $\RT$ centered in $0$ with
radius $R$.

As it is well known, problem \eqref{P} is equivalent to that of
finding critical points of a functional depending only on the
variable $u$ and which includes a nonlocal nonlinear term. Many
papers treated functionals presenting both a critically growing
nonlinearity and a nonlocal nonlinearity  (see \cite{AP, CCM, C, ZZ}),
but, up to our knowledge, it has been never
considered the case when the term presenting a critical growth
corresponds with the one containing the nonlocal nonlinearity.\\
From a technical point of view, the use of an approach similar to
that of Brezis and Nirenberg requires different estimates with
respect to those used in the above mentioned papers. Indeed, since
it is just the nonlocal term of the functional the
cause of the lack of compactness, it seems natural to compare it
with the critical Lebesgue norm.\\
The main result we present is the following.
    \begin{theorem}\label{th1}
        Set $\l_1$ the first eigenvalue of $-\Delta$ in $B_R.$
        If $\l \in ]\frac 3{10
        } \l_1,\l_1[$, then
        problem \eqref{P} has
        a positive ground state solution for any $q>0.$
    \end{theorem}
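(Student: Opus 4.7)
The plan is to follow the variational scheme of Brezis-Nirenberg, adapted to the nonlocal term. For each $u \in H^1_0(\BR)$ the Poisson equation $-\Delta\phi = q|u|^5$ with zero boundary data has a unique solution $\phi_u \in H^1_0(\BR)$, so $(u,\phi_u)$ solves $(\mathcal P)$ iff $u$ is a critical point of
\[
I(u) = \frac12\int_{\BR}\bigl(|\nabla u|^2 - \lambda u^2\bigr) - \frac{q}{10}\int_{\BR}\phi_u|u|^5.
\]
A short computation based on $\phi_u = q(-\Delta)^{-1}|u|^5$ shows that $I'(u)[v] = \int(\nabla u\cdot\nabla v - \lambda uv) - q\int\phi_u|u|^3 u\,v$, so the Euler-Lagrange equation is exactly the reduced form of $(\mathcal P)$. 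Set $\ell(u) = \int(|\nabla u|^2 - \lambda u^2)$ and $k(u) = \int \phi_u|u|^5$: the assumption $\lambda<\lambda_1$ makes $\ell$ positive-definite, while $k$ is positive and $10$-homogeneous (since $\phi_{tu} = t^5\phi_u$), so $I$ has mountain-pass geometry. On the Nehari manifold $\{\ell(u) = q\,k(u)\}$ one has $I(u) = \tfrac{2}{5}\ell(u)$, and the scale invariance of the quotient below identifies the mountain-pass level with $\frac{2}{5q^{1/4}}S_\lambda^{5/4}$, where
\[
S_\lambda := \inf_{u\in H^1_0(\BR)\setminus\{0\}}\frac{\ell(u)}{k(u)^{1/5}}.
\]

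The next step is to locate a range of compactness. By Hardy-Littlewood-Sobolev followed by Sobolev, $k(u) \le C\|\nabla u\|_2^{10}$, so the limit problem on $\RT$ (without the $\lambda u^2$ term) has a finite sharp constant $S_0>0$, attained, up to translations and dilations, by the Aubin-Talenti bubbles $U_\epsilon(x) = \epsilon^{1/2}(\epsilon^2 + |x|^2)^{-1/2}$. A Brezis-Lieb-type splitting for the nonlocal term shows that if $u_n$ is a Palais-Smale sequence for $I$ at level $c$ with $u_n \rightharpoonup u$, then up to a subsequence $I(u_n) - I(u) - I_\infty(u_n-u)\to 0$, and any bubble escaping to concentration contributes at least $\frac{2}{5q^{1/4}}S_0^{5/4}$ to the energy. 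Consequently every PS sequence below $\frac{2}{5q^{1/4}}S_0^{5/4}$ is relatively compact, and producing a minimizer of $S_\lambda$ reduces to the strict comparison $S_\lambda < S_0$.

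The core of the proof, and the main obstacle, is the asymptotic analysis of $Q_\lambda(u_\epsilon) := \ell(u_\epsilon)/k(u_\epsilon)^{1/5}$ for a truncated bubble $u_\epsilon(x) = \eta(x)U_\epsilon(x)$ with $\eta$ a smooth radial cutoff supported in $\BR$. Classical estimates give $\int|\nabla u_\epsilon|^2 = \|\nabla U\|_2^2 + O(\epsilon)$ and $\int u_\epsilon^2 = A\epsilon + o(\epsilon)$ for an explicit $A>0$; the delicate ingredient is expanding the nonlocal term $\int\phi_{u_\epsilon}u_\epsilon^5$ up to first order in $\epsilon$, which requires the explicit Green's function of $-\Delta$ on $\BR$ to isolate the boundary correction to $\phi_{U_\epsilon}$. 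After balancing, one finds $Q_\lambda(u_\epsilon) = S_0 + (C_1 - \lambda C_2)\epsilon + o(\epsilon)$ with $C_1, C_2>0$ explicit, and the inequality $\lambda C_2 > C_1$ reduces exactly to $\lambda > \frac{3}{10}\lambda_1$, which is where the threshold emerges. Once $S_\lambda < S_0$ is established and a minimizer $u$ is obtained, replacing $u$ by $|u|$ preserves the quotient, and applying the strong maximum principle to the rescaled Euler-Lagrange equation gives positivity; by construction $u$ is a ground state of $(\mathcal P)$.
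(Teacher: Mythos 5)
Your overall skeleton (reduction to the one-variable functional $I$, identification of the mountain-pass level with the infimum of the scale-invariant quotient over the Nehari manifold, and the compactness threshold, which your $\frac{2}{5q^{1/4}}S_0^{5/4}$ correctly matches with the paper's $\frac 25\sqrt{S^3/q}$) is the right one and agrees with the paper up to that point. The gap is in the single step the whole theorem hinges on: the strict inequality $S_\l<S_0$, equivalently $c<\frac 25\sqrt{S^3/q}$. You correctly identify the expansion of the nonlocal term $\int_{\BR}\phi_{u_\eps}|u_\eps|^5$ as ``the core of the proof, and the main obstacle,'' but you then simply assert its outcome, namely that $Q_\l(u_\eps)=S_0+(C_1-\l C_2)\eps+o(\eps)$ with $\l C_2>C_1$ reducing ``exactly'' to $\l>\frac 3{10}\l_1$. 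No such expansion is carried out; it would require the Green/Robin function of the ball and a two-scale analysis of the convolution to isolate the boundary correction to $\phi_{u_\eps}$, and the resulting constant $C_1$ has no a priori relation to the number $\frac 3{10}\l_1$. As written, the proof of the only nontrivial inequality in the theorem is missing.

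The paper avoids this expansion altogether with an elementary device you do not use: multiplying the Poisson equation by $|u|$ and integrating gives
\[
q\|u\|_6^6=\int_{\BR}(\n\phi_u|\n |u|)\le\frac 12\|\n\phi_u\|_2^2+\frac 12\|\n |u|\|_2^2,
\]
hence $I(u)\le J(u):=\frac 35\|\n u\|_2^2-\frac{\l}{2}\|u\|_2^2-\frac q5\|u\|_6^6$, a purely local Brezis--Nirenberg-type functional whose fibering maxima along the truncated bubbles are computable from the classical estimates; the coefficient $\frac 56$ in front of $\l$ produced by that computation is exactly where $\frac 3{10}\l_1$ comes from (the condition $A(\varphi)<0$ reads $\frac{\l_1}{4}<\frac 56\l$ for $\varphi(r)=\cos(\frac{\pi r}{2R})$). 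Note that this comparison is lossy (equality in the Young step would force $\phi_u=|u|$), so a genuine expansion of the nonlocal term, were you to carry it out, would most likely yield a \emph{different} threshold rather than reproduce $\frac 3{10}\l_1$; your claim that it does is therefore not only unproved but probably false as stated. Two smaller points: the assertion that $S_0$ is attained by Aubin--Talenti bubbles is true but needs the observation that H\"older and Sobolev are simultaneously saturated there (indeed $\phi_U=\frac q3 U$ on $\RT$); and the Brezis--Lieb splitting of the nonlocal term is more machinery than is needed, since the paper only has to exclude $u_n\weakto 0$ and then verifies $I(u)\le c$ by weak lower semicontinuity.
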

The analogy with the problem \eqref{eq:BN} applies also to some non
existence results. Indeed a classical Poho\u{z}aev obstruction holds for
\eqref{P} according to the following result.
    \begin{theorem}\label{th2}
        Problem \eqref{P} has no nontrivial solution if $\l\le 0$.
    \end{theorem}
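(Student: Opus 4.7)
The strategy is a Poho\v{z}aev-type argument for the coupled system. Assuming $(u,\phi)$ is a classical solution of \eqref{P} (which follows from elliptic regularity, since both right-hand sides are bounded and H\"older continuous), my first step is to test the first equation of \eqref{P} against the multiplier $x\cdot\n u$ and integrate on $\BR$. The standard identity
\[
\int_{\BR}(-\Delta u)(x\cdot\n u)\,dx=-\tfrac12\int_{\BR}|\n u|^2\,dx-\tfrac{R}{2}\int_{\de\BR}|\de_\nu u|^2\,d\sigma
\]
handles the left-hand side (in $\RT$, on $\de\BR$ one has $x\cdot\nu=R$), while for the right-hand side the key rewriting $|u|^3u\,\n u=\tfrac15\n|u|^5$, followed by one integration by parts in which the boundary term vanishes because $\phi|_{\de\BR}=0$, produces the (unwanted) cross term $-\tfrac{q}{5}\int_{\BR}|u|^5(x\cdot\n\phi)$ alongside a clean $-\tfrac{3q}{5}\int_{\BR}\phi|u|^5$.

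To eliminate this cross term I would apply the Poho\v{z}aev multiplier $x\cdot\n\phi$ to the second equation of \eqref{P}, obtaining
\[
q\int_{\BR}|u|^5(x\cdot\n\phi)\,dx=-\tfrac12\int_{\BR}|\n\phi|^2\,dx-\tfrac{R}{2}\int_{\de\BR}|\de_\nu\phi|^2\,d\sigma,
\]
and substitute back. Finally, testing the system against $u$ and $\phi$ themselves yields the two energy identities $\int|\n u|^2=\l\int u^2+q\int\phi|u|^5$ and $\int|\n\phi|^2=q\int\phi|u|^5$, which after substitution make every remaining interior integral in $\phi$ cancel, leaving the clean boundary identity
\[
R\int_{\de\BR}|\de_\nu u|^2\,d\sigma+\tfrac{R}{5}\int_{\de\BR}|\de_\nu\phi|^2\,d\sigma=2\l\int_{\BR}u^2\,dx.
\]

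From this identity the conclusion is quick. For $\l<0$ the left-hand side is nonnegative while the right-hand side is nonpositive; both sides must vanish, and in particular $\int_{\BR}u^2=0$ forces $u\equiv0$, whence $\phi\equiv0$ from the second equation. In the borderline case $\l=0$ the identity forces $\de_\nu u\equiv0$ on $\de\BR$ on top of the Dirichlet condition $u|_{\de\BR}=0$; extending $u$ and $\phi$ by zero to a slightly larger ball $B_{R+\eps}$ produces a weak solution $\wu$ of $-\Delta\wu=V\wu$ with $V=(\l+q|\wu|^3\widetilde{\phi})\chi_{\BR}\in L^\infty$, and since $\wu$ vanishes on the nonempty open annular region $B_{R+\eps}\setminus\BR$, Aronszajn's unique continuation principle forces $\wu\equiv0$, hence $u\equiv0$ on $\BR$ and then again $\phi\equiv 0$.

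The main delicate point is the algebraic bookkeeping that makes the interior $\phi$-terms cancel exactly: the coefficient $\tfrac15$ arising from $|u|^3u\,\n u=\tfrac15\n|u|^5$ must combine with the coefficient produced by the second Poho\v{z}aev multiplier in just the right way, and only the combined use of both Poho\v{z}aev identities \emph{and} the two energy identities kills all the volumic terms in $\phi$ simultaneously. Everything else, including the unique-continuation invocation in the boundary case $\l=0$, is routine.
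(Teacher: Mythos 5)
Your argument is correct and follows essentially the same route as the paper: the same two Poho\v{z}aev multipliers $x\cdot\n u$ and $x\cdot\n\phi$, the same substitution eliminating the cross term $\int|u|^5(x\cdot\n\phi)$, and the same two energy identities, leading to exactly the paper's final identity (your version is the paper's multiplied by $2$ and specialized to the ball, where $x\cdot{\bf n}=R$). The only genuine divergence is the endgame for $\l=0$: you extract $\de_\nu u\equiv 0$ on $\de B_R$ and invoke Aronszajn's unique continuation principle after extending by zero, whereas the paper uses the other vanishing boundary term, $\de_\nu\phi\equiv 0$ on $\de B_R$, and simply integrates the second equation over $B_R$ to get $q\int_{B_R}|u|^5=\int_{\de B_R}(-\de_\nu\phi)=0$, hence $u\equiv 0$ directly. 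Both finishes are valid, but the paper's is more elementary — it needs no unique continuation theorem and no regularity of $u$ beyond what the Poho\v{z}aev computation already requires (your UCP step also quietly needs $|u|^3\phi\in L^\infty$, i.e.\ a Brezis--Kato type bootstrap at the critical exponent, which the divergence-theorem argument avoids). You might note that you used only one of the two boundary terms that the identity forces to vanish; the one you discarded gives the quicker conclusion.
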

Actually, Theorem \ref{th2} holds also if the domain is a general smooth and
star shaped open bounded set.
Moreover, a standard argument allows us also to prove that there
exists no solution to \eqref{P} if $\l\ge\l_1$ (see \cite[Remark 1.1]{BN}).
\\
It remains an open problem what happens if $\l\in ]0,\frac 3{10}
\l_1].$

The paper is so organized:  Section \ref{sec:nonex} is devoted to prove the
nonexistence result which does not require any variational argument;
in Section \ref{sec:ex} we introduce our
variational approach and prove the existence of a positive ground state
solution.

\section{Nonexistence result}\label{sec:nonex}
In this section, following \cite{DM}, we adapt the Poho\u{z}aev arguments in \cite{P} to
our situation.

Let $\O \subset \RT$ be a star shaped domain and $(u,\phi)\in\H\times\H$ be a nontrivial
solution of (\ref{P}). If we multiply the first equation of (\ref{P}) by $x\cdot\n u$ and the second one by $x\cdot\n\phi$ we have that
\begin{align*}
0=&(\Delta u + \l u + q \phi |u|^3 u)(x\cdot\n u)\\
 =&\operatorname{div}\left[(\n u)(x\cdot\n u)\right] - |\n u|^2
   - x\cdot \n \left(\frac{|\n u|^2}{2}\right) + \frac{\l}{2} \n (u^2)
   + \frac{q}{5} x\cdot\n\left(\phi |u|^5\right) - \frac{q}{5} (x\cdot\n\phi)|u|^5\\
 =&\operatorname{div}\left[(\n u)(x\cdot\n u) - x \frac{|\n u|^2}{2} + \frac{\l}{2} x u^2
   +\frac{q}{5} x \phi |u|^5 \right] + \frac{1}{2} |\n u|^2 - \frac{3}{2} \l u^2
   - \frac{3}{5} q \phi |u|^5 - \frac{q}{5} (x\cdot\n\phi) |u|^5
\end{align*}
and
\begin{align*}
0=&(\Delta \phi + q |u|^5)(x\cdot\n \phi)\\
 =&\operatorname{div}\left[(\n \phi)(x\cdot\n \phi)\right] - |\n \phi|^2
   - x\cdot \n \left(\frac{|\n \phi|^2}{2}\right) + q (x\cdot\n\phi) |u|^5\\
 =&\operatorname{div}\left[(\n \phi)(x\cdot\n \phi) - x \frac{|\n \phi|^2}{2}
   \right] + \frac{1}{2} |\n \phi|^2 + q (x\cdot\n\phi) |u|^5.
\end{align*}
Integrating on $\O$, by boundary conditions, we obtain
\begin{equation}\label{eq:Poho1}
-\frac{1}{2} \|\n u \|_2^2 - \frac{1}{2} \int_{\partial\O} \left| \frac{\partial u}{\partial {\bf n}}\right|^2 x\cdot {\bf n} = - \frac{3}{2} \l \|u\|_2^2 -\frac{3}{5} q \iO \phi |u|^5 -\frac{q}{5} \iO (x\cdot\n\phi) |u|^5
\end{equation}
and
\begin{equation}\label{eq:Poho2}
-\frac{1}{2} \|\n\phi\|_2^2 -\frac{1}{2} \int_{\partial\O} \left| \frac{\partial \phi}{\partial {\bf n}}\right|^2 x\cdot {\bf n}
= q \iO (x \cdot \n \phi) |u|^5.
\end{equation}
Substituting \eqref{eq:Poho2} into \eqref{eq:Poho1} we have
\begin{equation}\label{eq:Pohoc}
-\frac{1}{2} \|\n u \|_2^2 - \frac{1}{2} \int_{\partial\O} \left| \frac{\partial u}{\partial {\bf n}}\right|^2 x\cdot {\bf n} = - \frac{3}{2} \l \|u\|_2^2  -\frac{3}{5} q \iO \phi |u|^5 + \frac{1}{10}  \|\n\phi\|_2^2 + \frac{1}{10}  \int_{\partial\O} \left| \frac{\partial \phi}{\partial {\bf n}}\right|^2 x\cdot {\bf n} .
\end{equation}
Moreover, multiplying the first equation of (\ref{P}) by $u$ and the second one by $\phi$ we get
\begin{equation}\label{eq:Ne1}
\|\n u \|_2^2= \l  \| u \|_2^2 + q \iO \phi |u|^5
\end{equation}
and
\begin{equation}\label{eq:Ne2}
\|\n\phi\|_2^2 = q \iO \phi |u|^5.
\end{equation}
Hence, combining \eqref{eq:Pohoc}, \eqref{eq:Ne1} and \eqref{eq:Ne2}, we have
\[
-\l \| u \|_2^2 + \frac{1}{2} \int_{\partial\O} \left| \frac{\partial u}{\partial {\bf n}}\right|^2 x\cdot {\bf n}  + \frac{1}{10}  \int_{\partial\O} \left| \frac{\partial \phi}{\partial {\bf n}}\right|^2 x\cdot {\bf n} = 0
\]
Then, if $\l<0$ we get a contradiction.\\
If $\l=0$, then
\[
 \int_{\partial\O} \left| \frac{\partial \phi}{\partial {\bf n}}\right|^2 x\cdot {\bf n} = 0
\]
and so by the second equation of \eqref{P} we get $\|u\|_5=0$. Therefore $(u,\phi)=(0,0)$ which is a contradiction.

\section{Proof of Theorem \ref{th1}}\label{sec:ex}


Problem \eqref{P} is
variational and the related $C^1$ functional $F:H^1_0(\BR) \times
H^1_0(\BR) \rightarrow \mathbb{R}$ is given by
\[
F(u,\phi)
=\frac{1}{2}\int_{\BR} |\nabla u|^2
-\frac{\l}{2}\int_{\BR} u^2
-\frac{q}{5}\int_{\BR} |u|^5 \phi 
+\frac{1}{10}\int_{\BR} |\nabla \phi|^2.
\]

The functional $F$ is strongly indefinite. To avoid this indefiniteness, we apply the following reduction argument.\\
First of all we give the following result.
\begin{lemma}
\label{le:inv} For every $u\in H^1_0(\BR)$ there exists a unique
$\phi_u\in H^1_0(\BR)$ solution of
\[
\left\{
\begin{array}{ll}
    -\Delta \phi=q |u|^5
    &
    \hbox{in } \BR,\\
    \phi=0
    &
    \hbox{on } \partial \BR.
\end{array}
\right.
\]
Moreover, for any $u\in H^1_0(\BR)$, $\phi_u \ge 0$ and the map
\begin{equation*}
u \in H^1_0(\BR) \mapsto \phi_u \in H^1_0(\BR)
\end{equation*}
is continuously differentiable. Finally we have
\begin{equation}\label{eq:Ne2u}
\|\n\phi_u\|_2^2=q\int_{\BR} |u|^5 \phi_u
\end{equation}
and
\begin{equation}
\label{eq:essi}
\|\n\phi_u\|_2\leq \frac{q}{S^3} \|\n u\|_2^5
\end{equation}
where
\[
S=\inf_{v\in H^1_0(\BR)\setminus\{0\}}\frac{\|\n v\|_2^2}{\|v\|_6^2}.
\]
\end{lemma}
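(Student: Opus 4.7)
The proof naturally splits into the five claims of the lemma (existence/uniqueness, nonnegativity, $C^1$ dependence, the identity, and the estimate). The plan is to set each of these up as a consequence of standard elliptic arguments combined with the Sobolev embedding $H^1_0(\BR) \hookrightarrow L^6(\BR)$.

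For existence and uniqueness, I would first observe that since $u\in H^1_0(\BR)\subset L^6(\BR)$, the source term $q|u|^5$ belongs to $L^{6/5}(\BR)\subset H^{-1}(\BR)$. The bilinear form $(\phi,\psi)\mapsto\int_{\BR}\nabla\phi\cdot\nabla\psi$ is continuous and coercive on $\HO$ (by Poincaré), so the Lax--Milgram theorem produces a unique weak solution $\phi_u$. Nonnegativity then follows from the weak maximum principle applied to the nonnegative right-hand side, or alternatively from testing with $\phi_u^{-}$, which forces $\phi_u^{-}\equiv 0$. The identity \eqref{eq:Ne2u} is immediate by using $\phi_u$ itself as a test function in the weak formulation of $-\Delta\phi_u=q|u|^5$.

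For the $C^1$ dependence, the idea is to write $u\mapsto\phi_u$ as the composition $u\mapsto q|u|^5\mapsto (-\Delta)^{-1}(q|u|^5)$. The operator $(-\Delta)^{-1}:H^{-1}(\BR)\to\HO$ is a linear isomorphism, so it suffices to check that the Nemytskii map $N:u\mapsto|u|^5$ is of class $C^1$ from $L^6(\BR)$ to $L^{6/5}(\BR)$, which is a standard computation: its Fréchet derivative at $u$ is $h\mapsto 5|u|^3uh$, whose continuity follows from Hölder's inequality with exponents $(3/2,6)$ and the continuity in $L^{3/2}$ of $u\mapsto|u|^4$. Composition with the continuous inclusion $\HO\hookrightarrow L^6(\BR)$ on the source side and with $(-\Delta)^{-1}$ on the target side gives the required $C^1$ regularity.

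The estimate \eqref{eq:essi} is obtained by chaining Hölder and Sobolev on the identity \eqref{eq:Ne2u}. Concretely,
\[
\|\n\phi_u\|_2^2=q\int_{\BR}|u|^5\phi_u\le q\|u\|_6^5\,\|\phi_u\|_6\le q\,S^{-5/2}\|\n u\|_2^5\cdot S^{-1/2}\|\n\phi_u\|_2,
\]
after which dividing by $\|\n\phi_u\|_2$ yields the claimed bound (the case $\|\n\phi_u\|_2=0$ being trivial). None of the steps is especially delicate; the only point that requires a little care is verifying that $u\mapsto|u|^5$ is genuinely $C^1$ rather than merely Gâteaux-differentiable between the relevant Lebesgue spaces, and this is where I would spend most of the attention in the write-up.
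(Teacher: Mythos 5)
Your proposal is correct and follows essentially the same route as the paper: the paper handles existence, uniqueness, nonnegativity and the $C^1$ dependence by simply invoking the standard reduction argument of Benci--Fortunato (which is exactly the Lax--Milgram plus Nemytskii-operator reasoning you spell out), and proves \eqref{eq:essi} by the identical chain of H\"older and Sobolev inequalities applied to \eqref{eq:Ne2u}. The only difference is that you fill in details the paper delegates to a reference.
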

\begin{proof}
To prove the first part we can proceed reasoning as in \cite{BF}.\\ To show \eqref{eq:essi}, we argue in the following way. By applying H\"older and Sobolev inquality to \eqref{eq:Ne2u}, we get
\[
\|\n\phi_u\|_2^2 \le q \|\phi_u\|_6 \|u\|_6^5 \le \frac{q}{\sqrt{S}} \|\n\phi_u\|_2 \|u\|_6^5.
\]
Then
\[
\|\n\phi_u\|_2 \le \frac{q}{\sqrt{S}} \|u\|_6^5 \le \frac{q}{S^3} \| \n u \|_2^5.
\]
\end{proof}
So, using Lemma \ref{le:inv}, we can consider on $H^1_0(\BR)$ the $C^1$ one variable functional
        \begin{equation*}
        I(u):=F(u,\phi_u)=
        \frac{1}{2}\int_{\BR} |\nabla u|^2
        -\frac{\l}{2}\int_{\BR} u^2
        -\frac{1}{10}\int_{\BR} |\n \phi_u|^2 
        \end{equation*}

By standard variational arguments as those in \cite{BF}, the
following result can be easily proved.
\begin{proposition}
Let $(u,\phi)\in H^1_0(\BR)\times
H^1_0(\BR)$, then the following propositions are equivalent:
\begin{enumerate}[label=(\alph*), ref=\alph*]
\item $(u,\phi)$ is a critical point of functional $F$;
\item $u$ is a critical point of functional $I$ and
$\phi=\phi_u$.
\end{enumerate}
\end{proposition}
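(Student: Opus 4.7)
The plan is to unpack both directions by direct computation of the partial Fr\'echet derivatives of $F$ together with the chain rule applied to $I(u) = F(u,\phi_u)$, invoking Lemma \ref{le:inv} for the $C^1$ dependence $u \mapsto \phi_u$ and for uniqueness.

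First I would compute the two partial derivatives of $F$. For $v, \psi \in H^1_0(B_R)$,
\[
\partial_u F(u,\phi)[v] = \int_{B_R}\!\nabla u \cdot \nabla v - \l\!\int_{B_R}\! u\,v - q\!\int_{B_R}\! |u|^3 u\, \phi\, v,
\qquad
\partial_\phi F(u,\phi)[\psi] = -\frac{q}{5}\!\int_{B_R}\! |u|^5 \psi + \frac{1}{5}\!\int_{B_R}\!\nabla \phi \cdot \nabla \psi.
\]
Observe that $\partial_\phi F(u,\phi) = 0$ is precisely the weak formulation of $-\Delta \phi = q |u|^5$ with Dirichlet boundary condition, and Lemma \ref{le:inv} provides the unique weak solution $\phi_u$. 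Similarly, $\partial_u F(u,\phi) = 0$ corresponds to the first equation of \eqref{P}.

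For the implication (a) $\Rightarrow$ (b): if $(u,\phi)$ is a critical point of $F$, then $\partial_\phi F(u,\phi) = 0$ forces $\phi$ to be a weak solution of the Poisson problem in Lemma \ref{le:inv}, hence $\phi = \phi_u$ by uniqueness. Combined with $\partial_u F(u,\phi_u) = 0$, the chain rule gives
\[
I'(u)[v] = \partial_u F(u,\phi_u)[v] + \partial_\phi F(u,\phi_u)\bigl[\phi_u'(u)[v]\bigr] = 0
\]
for every $v$, since both terms vanish.

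For the reverse implication (b) $\Rightarrow$ (a): assuming $\phi = \phi_u$ and $I'(u) = 0$, the defining property of $\phi_u$ yields $\partial_\phi F(u,\phi_u) = 0$ as a linear functional, so the chain rule reduces $I'(u)[v]$ to $\partial_u F(u,\phi_u)[v]$, and $I'(u) = 0$ thus forces $\partial_u F(u,\phi_u) = 0$. Hence $(u,\phi_u)$ is a critical point of $F$. The only nontrivial ingredient is the $C^1$ regularity of $u \mapsto \phi_u$, already guaranteed by Lemma \ref{le:inv}, so no real obstacle arises; the argument is essentially the standard reduction scheme used in \cite{BF}.
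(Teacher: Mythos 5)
Your computation of the partial derivatives of $F$ is correct (note $\partial_\phi F(u,\phi)=0$ is exactly the weak form of $-\Delta\phi=q|u|^5$, and $\partial_\phi F(u,\phi_u)\equiv 0$ for every $u$, so the chain rule collapses $I'(u)$ to $\partial_u F(u,\phi_u)$ in both directions), and the argument is complete. This is precisely the standard reduction scheme of Benci--Fortunato that the paper invokes without writing out, so your proposal matches the intended proof.
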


To find solutions of (\ref{P}), we look for critical points of $I$.

The functional $I$ satisfies the geometrical assumptions of the Mountain Pass Theorem (see \cite{AR}).\\
So, we set
\[
c=\inf_{\g \in \G} \max_{t\in [0,1]} I(\g(t)),
\]
where $\G=\left\{\g\in C([0,1],H^1_0(\BR)) \; \vline \; \g(0)=0, I(\g(1))<0\right\}$.\\
Now we proceed as follows:
\begin{enumerate}[label={\bf Step \arabic*:}, ref={Step \arabic*}]
\item \label{step1} we prove that there exists a nontrivial solution to the problem \eqref{P};
\item \label{step2} we show that such a solution is a ground state.
\end{enumerate}

\begin{remark}
Observe that standard elliptic arguments based on the maximum
principle work, so that we are allowed to assume that $u$ and $\phi_u$, solutions of \eqref{P}, are both positive.
\end{remark}



\noindent{\bf Proof of \ref{step1}:} {\it there exists a solution of \eqref{P}}.

Let $(u_n)_n$ be a Palais-Smale sequence at the mountain pass level $c$.
It is easy to verify that $(u_n)_n$ is bounded so, up to a
subsequence, we can suppose it is weakly convergent.

 Suppose by contradiction that $u_n\rightharpoonup 0$ in $H^1_0(\BR)$. Then $u_n\to 0$ in $L^2(\BR)$.\\
 Since $I(u_n)\to c$ and $\langle I'(u_n),u_n\rangle\to 0$ we have
    \begin{equation}\label{eq:one}
       \frac 1 2 \|\n u_n\|_2^2 -\frac 1 {10} \| \n \phi_{n}\|^2=c + o_n(1)
    \end{equation}
and
    \begin{equation}\label{eq:two}
        \|\n u_n\|_2^2 - \| \n \phi_{n}\|^2 = o_n(1)
    \end{equation}
where we have set $\phi_n=\phi_{u_n}$.
Combining \eqref{eq:one} and \eqref{eq:two} we have
    \begin{equation*}
        \|\n u_n\|_2^2=\frac 5 2 c + o_n(1)
    \end{equation*}
    and
    \[
    \|\n \phi_n\|_2 = \frac 5 2 c + o_n(1).
    \]
Then, since $(u_n,\phi_n)$ satisfies \eqref{eq:essi}, passing to the limit we get
    \begin{equation}\label{eq:cont}
        c \ge \frac 25\sqrt{\frac  {S^3} q}.
    \end{equation}
Now consider a fixed smooth function
$\varphi=\varphi(r)$ such that $\varphi(0)=1,$ $\varphi'(0)=0$ and $\varphi(R)=0.$ Following \cite[Lemma 1.3]{BN}, we set $r=|x|$ and
    \begin{equation*}
        u_\eps (r)=\frac {\varphi(r)}{(\eps + r^2)^{\frac 1 2}}.
    \end{equation*}
The following estimates can be found in \cite{BN}
    \begin{align*}
        \|\n u_\eps\|_2^2&=S\frac{K}{\eps^{\frac12}}+\o\int_0^R|\varphi'(r)|^2\,dr + O(\eps^{\frac12}),\\
        \|u_\eps\|_6^2&=\frac{K}{\eps^{\frac12}}+O(\eps^{\frac12}),\\
        \|u_\eps\|_2^2&=\o\int_0^R\varphi^2(r)\,dr+O(\eps^{\frac12}),
    \end{align*}
where $K$ is a positive constant and $\o$ is the area of the unitary sphere in $\RT.$

We are going to give an estimate of the value $c$. Observe that,
multiplying the second equation of \eqref{P} by $|u|$ and
integrating, we have that
    \begin{equation}\label{eq:ineq}
        q\|u\|_6^6 = \int_{\BR} (\n\phi_u|\n |u|)\le \frac 1{2} \|\n\phi_u\|_2^2 + \frac 12\|\n |u|\|_2^2.
    \end{equation}
So, if we introduce the new functional $J:H^1(\BR)\to\R$ defined in the following way
    $$J(u):= \frac {3}{5} \int_{\BR} |\n u|^2 - \frac \l2 \int_{\BR} u^2 - \frac q5\int_{\BR}|u|^6,$$
by \eqref{eq:ineq} we have that $I(u)\le J(u),$ for any $u\in
H^1_0(\BR),$ and $c\le \displaystyle\inf_{u\in H^1_0(\BR)
\setminus\{0\}}\sup_{t>0} J(tu).$

Now we compute $\sup_{t>0} J(tu_\eps)=J(t_\eps u_\eps),$ where
$t_\eps$ is the unique positive solution of the equation
    $$\frac d {dt} J(tu_\eps)=0.$$
Since
    $$\frac d {dt} J(tu_\eps)=\frac {6}5 t\int_{\BR} |\n u_\eps|^2 -  \l t \int_{\BR} u_\eps^2 -
    \frac 65t^5 q \int_{\BR}|u_\eps|^6,$$
we have that
$$
t_\eps= \frac 1 {\|u_\eps\|_6}\sqrt[4]{\frac{\frac {6}5 \|\n
u_\eps\|^2_2-\l\|u_\eps\|_2^2}{\frac 65 q\|u_\eps\|_6^2}}=\frac 1
{\|u_\eps\|_6}\sqrt[4]{\frac S q+A(\varphi) \eps^{\frac12} +
O(\eps)},
$$
where we have set
    $$A(\varphi)=\frac{\o}{q K} \int_0^R\left(|\varphi'(r)|^2-\frac 5
{6}\l\varphi^2(r)\right)\,dr.$$

Then
    \begin{align}\label{eq:est}
        \sup_{t>0} J(tu_\eps)&=J(t_\eps u_\eps)\nonumber\\
        &=\frac {3}{5} t_\eps^2 \int_{\BR} |\n u_\eps|^2 -
        \frac \l2 t_\eps^2 \int_{\BR} u_\eps^2 - \frac q5 t^6_\eps\int_{\BR}|u_\eps|^6\nonumber\\
        &=\frac 25 q \sqrt{\left(\frac S q+A(\varphi) \eps^{\frac12}+
        O(\eps)\right)^3}.
    \end{align}
Now, if we take $\varphi(r)=\cos(\frac{\pi r}{2R})$ as in \cite{BN}, we have that
    $$\int_0^R|\varphi'(r)|^2\,dr=\frac{\pi^2}{4R^2}\int_0^R\varphi^2(r)\,dr$$
and then, if $\l \in ]\frac 3{10} \l_1,\l_1[$, we deduce that
$A(\varphi)<0$. Taking $\eps$ sufficiently small, from
\eqref{eq:est} we conclude that $c<\frac 25\sqrt{\frac  {S^3} q},$
which contradicts \eqref{eq:cont}.

Then we have that $u_n\rightharpoonup u$ with
$u\in H^1_0(\BR) \setminus\{0\}$. We are going to prove that $u$ is a weak
solution of \eqref{P}.\\
As in \cite{ADL} it can be showed that $\phi_n\rightharpoonup\phi_u$
in $H^1_0(\BR).$ Now, set $\varphi$ a test function. Since
$I'(u_n)\to 0,$ we have that
    \begin{equation*}
       \langle I'(u_n),\varphi\rangle\to 0.
    \end{equation*}
On the other hand,
    \begin{multline*}
        \langle I'(u_n),\varphi\rangle= \int_{\BR} (\n u_n|\n\varphi) -\l\int_{\BR} u_n\varphi\\
        - q\int_{\BR}\phi_{n}|u_n|^3u_n\varphi\to \int_{\BR} (\n u|\n\varphi)-\l\int_{\BR} u\varphi- q\int_{\BR}\phi_{u}|u|^3u\varphi
    \end{multline*}
so we conclude that $(u,\phi_u)$ is a weak solution of \eqref{P}.

\medskip

\noindent{\bf Proof of \ref{step2}:}  {\it The solution found is a ground state}.\\
As in \ref{step1}, we consider a Palais-Smale sequence $(u_n)_n$ at level $c$. We have that $(u_n)_n$ weakly converges to a critical point $u$ of $I$.\\
To prove that such a critical point is a ground state we proceed as follows.\\
First of all we prove that
\[
I(u)\le c.
\]
Since $I(u_n)\to c$ and $\langle I'(u_n), u_n\rangle\to 0$, then
\[
I(u_n)= \frac 2 5 \int_{\BR} |\n u_n|^2 - \frac 2 5 \l \int_{\BR} u_n^2 + o_n(1) \to c.
\]
Moreover, being $(u,\phi_u)$ is a solution, we have
\[
\int_{\BR} |\n u|^2 - \l \int_{\BR} u^2 - q \int_{\BR} \phi_{u}|u|^5 = 0.
\]
Hence, by the lower semi-continuity of the $H^1_0$-norm and since $u_n \to u$ in $L^2(\BR)$,
\begin{align*}
I(u)
 = & \frac 2 5\left(\int_{\BR} |\n u|^2 - \l  \int_{\BR} u^2\right)\\
 \le & \frac 2 5 \left(\liminf_n \int_{\BR} |\n u_n|^2 - \l \lim_n \int_{\BR} u_n^2\right)\\
 = & \frac 2 5 \liminf_n  \left( \int_{\BR} |\n u_n|^2 - \l \int_{\BR} u_n^2 \right)\\
 = & c
\end{align*}
%
%
%
%
%
%
%
%
%
%
Finally, let $v$ be a nontrivial critical point of $I$.
Since the maximum of $I(tv)$ is achieved for $t=1$, then
\[
I(v)=\sup_{t>0} I(tv) \ge c \ge I(u).
\]

\end{document}